\renewcommand\eqref[1]{(\ref{#1})} 
\title[Global existence and blow-up of solutions]{Global existence and blow-up of solutions to porous medium equation and pseudo-parabolic equation, I. Stratified Groups}
\author[Michael Ruzhansky]{Michael Ruzhansky}
\address{\href{www.ruzhansky.org}{Michael Ruzhansky:}
	\endgraf
	Department of Mathematics: Analysis, Logic and Discrete Mathematics
	\endgraf
	Ghent University, Belgium
	\endgraf
	and
	\endgraf
	School of Mathematical Sciences
	\endgraf Queen Mary University of London 
	\endgraf
	United Kingdom
	\endgraf
	{\it E-mail address} {\rm Michael.Ruzhansky@ugent.be}
}
\author[Bolys Sabitbek]{Bolys Sabitbek}
\address{ \href{http://analysis-pde.org/bolys-sabitbek/}{Bolys Sabitbek:}
	\endgraf
	School of Mathematical Sciences
	\endgraf Queen Mary University of London 
	\endgraf
	United Kingdom
	\endgraf 
	and
	\endgraf 
	Al-Farabi Kazakh National University 
	\endgraf 
	Almaty,
	Kazakhstan 
	\endgraf
	{\it E-mail address} {\rm b.sabitbek@qmul.ac.uk}
}
\author[Berikbol Torebek]{Berikbol Torebek}
\address{{Berikbol Torebek:}
	\endgraf
	Department of Mathematics: Analysis, Logic and Discrete Mathematics 
	\endgraf
	Ghent University
	\endgraf
	Belgium
	\endgraf 
	and
	\endgraf 
	Institute of Mathematics and Mathematical Modeling 
	\endgraf
	Almaty,
	Kazakhstan
	\endgraf
	{\it E-mail address} {\rm berikbol.torebek@ugent.be}
}
\subjclass{35K92; 35B44, 35A01.}
\keywords{Blow-up, $p$-sub-Laplacian, porous medium equation, global solution, pseudo-parabloic equation, stratified groups}
\thanks{The first and second authors were supported by EPSRC grant EP/R003025/2. The first and third authors were also supported by FWO Odysseus 1 grant G.0H94.18N: Analysis and Partial Differential Equations and the Methusalem programme of the Ghent University Special Research Fund (BOF) (Grant number 01M01021).}
\newtheoremstyle{theorem}
{10pt}          
{10pt}  
{\sl}  
{\parindent}     
{\bf}  
{. }    
{ }    
{}     
\theoremstyle{theorem}
\numberwithin{equation}{section}
\theoremstyle{plain}
\newtheorem{thm}{Theorem}[section]
\newtheorem{lem}[thm]{Lemma}
\theoremstyle{definition}
\newtheorem{defn}[thm]{Definition}
\newtheorem{rem}[thm]{Remark}
\newtheoremstyle{defi}
{10pt}          
{10pt}  
{\rm}  
{\parindent}     
{\bf}  
{. }    
{ }    
{}     
\theoremstyle{defi}
\begin{document}
	\begin{abstract}
		In this paper, we prove a global existence and blow-up of the positive solutions to the initial-boundary value problem of the nonlinear porous medium equation and the nonlinear pseudo-parabolic equation on the stratified Lie groups. Our proof is based on the concavity argument and the Poincar\'e inequality, established in \cite{RS_JDE} for stratified groups.
	\end{abstract}
	\maketitle
	\tableofcontents
	\section{Introduction}
	The main purpose of this paper is to study the global existence and blow-up of the positive solutions to the initial-boundary problem of the nonlinear porous medium equation
	\begin{align}\label{main_eqn_p>2}
	\begin{cases}
	u_t -\mathcal{L}_p (u^m) = f(u), \,\,\, & x \in D,\,\, t>0, \\ 
	u(x,t)  =0,  \,\,\,& x\in \partial D,\,\, t>0, \\
	u(x,0)  = u_0(x)\geq 0,\,\,\, & x \in \overline{D}, 
	\end{cases}
	\end{align}
	and the nonlinear pseudo-parabolic equation
	\begin{align}\label{main_eqn_p}
	\begin{cases}
	u_t - \nabla_H \cdot (|\nabla_H u|^{p-2}\nabla_H u_t) -\mathcal{L}_pu = f(u), \,\,\, & x \in D,\,\, t>0, \\ 
	u(x,t)  =0,  \,\,\,& x\in \partial D,\,\, t>0, \\
	u(x,0)  = u_0(x)\geq 0,\,\,\, & x \in \overline{D},
	\end{cases}
	\end{align}
	where $m \geq 1$ and $p\geq 2$, $f$ is locally Lipschitz continuous on $\mathbb{R}$, $f(0)=0$, and such that $f(u)>0$ for $u>0$. Furthermore, we suppose that $u_0$ is a non-negative and non-trivial function in $C^1(\overline{D})$ with $u_0(x)=0$ on the boundary $\partial D$ for $p=2$ and in $L^{\infty}(D)\cap \mathring{S}^{1,p}(D)$ for $p>2$, respectively.

	\begin{defn}
		Let $\mathbb{G}$ be a stratified group. We say that an open set $D \subset \mathbb{G}$ is an admissible domain if it is bounded and if its boundary $\partial D$ is piecewise smooth and simple, that is, it has no self-intersections.
	\end{defn}
	
	Let $\mathbb{G}$ be a stratified group. Let $D \subset \mathbb{G}$ be an open set, then we define the functional spaces
	\begin{equation}\label{sobolev}
	S^{1,p}(D) =\{ u: D \rightarrow \mathbb{R}; u, |\nabla_{H} u| \in L^p(D) \}.
	\end{equation}
	We consider the following functional 
	\begin{equation*}
	\mathcal{J}_p(u):= \left( \int_{D} |\nabla_{H} u(x)|^p dx\right)^{\frac{1}{p}}.
	\end{equation*}
	Thus, the functional class $\mathring{S}^{1,p}(D)$ can be defined as the completion of $C_0^1(D)$ in the norm generated by $\mathcal{J}_p$, see e.g. \cite{CDG1993}.
	
	A Lie group $\mathbb{G}=(\mathbb{R}^{n},\circ)$ is called a stratified (Lie) group
	if it satisfies the following conditions:
	
	(a) For some integer numbers $N_1+N_{2}+...+N_{r}=n$,
	the decomposition $\mathbb{R}^{n}=\mathbb{R}^{N_1}\times\ldots\times\mathbb{R}^{N_{r}}$ is valid, and
	for any $\lambda>0$ the dilation 
	$$\delta_{\lambda}(x):=(\lambda x',\lambda ^{2}x^{(2)},\ldots,\lambda^{r}x^{(r)})$$
	is an automorphism of $\mathbb{G}.$ Here $x'\equiv x^{(1)}\in \mathbb{R}^{N_1}$ and $x^{(k)}\in \mathbb{R}^{N_{k}}$ for $k=2,\ldots,r.$
	
	(b) Let $N_1$ be as in (a) and let $X_{1},\ldots,X_{N_1}$ be the left-invariant vector fields on $\mathbb{G}$ such that
	$X_{k}(0)=\frac{\partial}{\partial x_{k}}|_{0}$ for $k=1,\ldots,N_1.$ Then the H\"ormander rank condition must be satisfied, that is, 
	$${\rm rank}({\rm Lie}\{X_{1},\ldots,X_{N_1}\})=n,$$
	for every $x\in\mathbb{R}^{n}.$ 
	
	Then, we say that the triple $\mathbb{G}=(\mathbb{R}^{n},\circ, \delta_{\lambda})$ is a stratified (Lie) group.
	
	Recall that the standard Lebesgue measure $dx$ on $\mathbb R^{n}$ is the Haar measure for $\mathbb{G}$ (see e.g. \cite{FR}, \cite{RS_book}).
	The left-invariant vector field $X_{j}$ has an explicit form: 
	\begin{equation}\label{Xk0}
	X_{k}=\frac{\partial}{\partial x'_{k}}+
	\sum_{l=2}^{r}\sum_{m=1}^{N_{l}}a_{k,m}^{(l)}(x',...,x^{(l-1)})
	\frac{\partial}{\partial x_{m}^{(l)}},
	\end{equation}
	see e.g. \cite{RS_book}.
	The following notations are used throughout this paper:
	$$\nabla_{H}:=(X_{1},\ldots, X_{N_1})$$
	for the horizontal gradient, and
	\begin{equation}\label{pLap}
	\mathcal{L}_{p}f:=\nabla_{H}\cdot(|\nabla_{H}f|^{p-2}\nabla_{H}f),\quad 1<p<\infty,
	\end{equation}
	for the $p$-sub-Laplacian. When $p=2$, that is,
	the second order differential operator
	\begin{equation}\label{sublap}
	\mathcal{L}=\sum_{k=1}^{N_1}X_{k}^{2},
	\end{equation}
	is called the sub-Laplacian on $\mathbb{G}$.
	The sub-Laplacian $\mathcal{L}$ is a left-invariant homogeneous hypoelliptic differential operator and it is known that $\mathcal{L}$ is elliptic if and only if the step of $\mathbb{G}$ is equal to 1.
	
	One of the important examples of the nonlinear parabolic equations is the porous medium equation, which describes widely processes involving fluid flow, heat transfer or diffusion, and its other applications in different fields such as mathematical biology, lubrication, boundary layer theory, and etc. Existence and nonexistence of solutions to problem \eqref{main_eqn_p>2} for the reaction term $u^m$ in the case $m=1$ and $m>1$ have been actively investigated by many authors, for example, \cite{Ball, Band-Brun, Chen-Fila-Guo, Ding-Hu, Deng-Levine, Gal-Vaz-97, Gr-Mu-Po-13, Hayakawa, Ia-San-14, Ia-San-19, Levine90, LP1, ST-21,  Sam-Gal-Ku-Mik, Souplet}, Grillo, Muratori and Punzo considered fractional porous medium equation \cite{Gr-Mu-Pu-1, Gr-Mu-Pu-2}, and it was also considered in the setting of Cartan-Hadamard manifolds \cite{Gr-Mu-Pu-3}. By using the concavity method, Schaefer \cite{Sch09} established a condition on the initial data of a Dirichlet type initial-boundary value problem for the porous medium equation with a power function reaction term when blow-up of the solution in finite time occurs and a global existence of the solution holds.  We refer for more details to Vazquez's book \cite{Vaz} which provides a systematic presentation of the mathematical theory of the porous medium equation.
	
	The energy for the isotropic material  can be modeled by a pseudo-parabolic equation \cite{CG68}. Some wave processes \cite{BBM72}, filtration of the two-phase flow in porous media with the dynamic capillary pressure \cite{Baren} are also modeled by pseudo-parabolic equations. The global existence and finite-time blow-up for the solutions to pseudo-parabolic equations in bounded and unbounded domains have been studied by many researchers, for example, see \cite{Korpusov1, Korpusov2, Long, Luo, Peng, Xu1, Xu2, Xu3} and the references therein.
	
	In \cite{PohVer}, Veron and Pohozaev have obtained blow-up results for the following semi-linear diffusion equation on the Heisenberg groups
	$$
	\frac{\partial u(x,t)}{\partial t}-\mathcal{L} u(x,t)=|u(x,t)|^{p},\,\,\,\,\,(x,t)\in \mathbb{H} \times(0,+\infty).
	$$
	Also, blow-up of the solutions to the semi-linear diffusion and pseudo-parabolic equations on the Heisenberg groups was derived in \cite{AAK1, AAK2, DL1, JKS1, JKS2}.
	In addition, in \cite{RY} the authors found the Fujita exponent on general unimodular Lie groups.

	In some of our considerations a crucial role is played by
	\begin{itemize}
		\item  The condition 
		\begin{equation}\label{cond-f}
		\alpha F(u) \leq u^m f(u) + \beta u^{pm} +\alpha\gamma,\,\,\, u>0,
		\end{equation}
		where 
		\begin{equation*}
		F(u)=\frac{pm}{m+1}\int_{0}^{u}s^{m-1}f(s)ds, \,\,\, m\geq 1,
		\end{equation*}
		introduced by Chung-Choi \cite{Chung-Choi} for a parabolic equation. We will deal with several variants of such condition.
		\item The Poincar\'e inequality established by the first author and Suragan in \cite{RS_JDE} for stratified groups: 
		\begin{lem}\label{lem1}
			Let $D \subset \mathbb{G}$ be an admissible domain with $N_1$ being the dimension of the first stratum. Let $1<p<\infty$ with $p\neq N_1$. For every function $u \in C_0^{\infty}(D \backslash \{x'=0\})$ we have 
			\begin{equation}
			\int_{D} |\nabla_{H} u|^p dx \geq \frac{|N_1-p|^p}{(pR)^p} \int_{D} |u|^p dx,
			\end{equation}
			where $R = \sup_{x \in D} |x'|$.
		\end{lem}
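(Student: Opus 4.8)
The plan is to prove first the stronger weighted Hardy-type inequality
\begin{equation*}
\int_D|\nabla_H u|^{p}\,dx\ge\Bigl(\frac{|N_1-p|}{p}\Bigr)^{p}\int_D\frac{|u|^{p}}{|x'|^{p}}\,dx ,
\end{equation*}
and then to obtain the stated Poincar\'e inequality essentially for free: since $|x'|\le R$ on $D$ by the definition of $R$, we have $|x'|^{-p}\ge R^{-p}$, so $\int_D|u|^{p}/|x'|^{p}\,dx\ge R^{-p}\int_D|u|^{p}\,dx$, and combining the two displays gives the constant $|N_1-p|^{p}/(pR)^{p}$.

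To prove the Hardy inequality I would start from the key structural feature of a stratified group visible in \eqref{Xk0}: each $X_k$ differs from $\partial/\partial x'_k$ only by terms carrying derivatives in the higher strata, hence $X_k$ acts on any function of the first-layer variable $x'$ alone exactly as the Euclidean partial derivative $\partial_{x'_k}$. With this, I would compute the horizontal divergence of the model horizontal vector field $V=\sum_{k=1}^{N_1}(x'_k/|x'|^{p})X_k$, which for $x'\neq 0$ gives the clean identity $\nabla_H\cdot V=(N_1-p)|x'|^{-p}$. Then, for $u\in C_0^{\infty}(D\setminus\{x'=0\})$ — the exclusion of the set $\{x'=0\}$ being precisely what makes all the functions below smooth and integrable, with no boundary contribution — integration by parts yields
\begin{equation*}
(N_1-p)\int_D\frac{|u|^{p}}{|x'|^{p}}\,dx=-p\int_D|u|^{p-2}u\,\frac{x'\cdot\nabla_H u}{|x'|^{p}}\,dx .
\end{equation*}

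From here the argument is the classical one. I would bound $|x'\cdot\nabla_H u|\le|x'|\,|\nabla_H u|$ by Cauchy--Schwarz, then apply H\"older's inequality with exponents $p/(p-1)$ and $p$ to get
\begin{equation*}
|N_1-p|\int_D\frac{|u|^{p}}{|x'|^{p}}\,dx\le p\Bigl(\int_D\frac{|u|^{p}}{|x'|^{p}}\,dx\Bigr)^{\frac{p-1}{p}}\Bigl(\int_D|\nabla_H u|^{p}\,dx\Bigr)^{\frac1p},
\end{equation*}
and then absorb the factor $\bigl(\int_D|u|^{p}/|x'|^{p}\bigr)^{(p-1)/p}$ into the left-hand side — legitimate because, by the compact support assumption away from $\{x'=0\}$, this integral is finite — and raise to the power $p$. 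This produces the Hardy inequality above, and the passage $|x'|\le R$ completes the proof.

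The only genuinely sub-Riemannian ingredient, and the step I would flag as the heart of the matter, is the reduction of $X_k$ to $\partial_{x'_k}$ on functions of the first-layer variable, which gives $|\nabla_H|x'||=1$ away from $\{x'=0\}$ and hence the divergence identity $\nabla_H\cdot(x'/|x'|^{p})=(N_1-p)|x'|^{-p}$; everything afterwards is the Euclidean-style Hardy-inequality bookkeeping. One should also keep in mind that the hypothesis $p\neq N_1$ is exactly what keeps the constant $|N_1-p|$ from degenerating, and that restricting the test functions to $C_0^{\infty}(D\setminus\{x'=0\})$ is what makes both the integration by parts and the subsequent division rigorous.
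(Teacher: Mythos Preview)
The paper does not actually prove this lemma; it is quoted verbatim from \cite{RS_JDE} as an established result and used as a black box throughout. Your proof is correct and is, in fact, essentially the argument given in that reference: the observation from \eqref{Xk0} that each $X_k$ acts as $\partial_{x'_k}$ on functions of the first-layer variable alone, the resulting divergence identity $\nabla_H\cdot(x'/|x'|^{p})=(N_1-p)|x'|^{-p}$, integration by parts against $|u|^{p}$, the Cauchy--Schwarz/H\"older step, and finally the trivial bound $|x'|\le R$ to pass from the weighted Hardy inequality to the Poincar\'e inequality. So there is nothing to compare against within the present paper, and your reconstruction matches the original source.
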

	\end{itemize}
	
	Note that it is possible to interpret the constant $\frac{|N_1-p|^p}{(pR)^p}$ as a measure of the size of the domain $D$. Then $\beta$ in \eqref{cond-f} is dependent on the size of the domain $D$.

	Our paper is organised so that we discuss the existence and nonexistence of positive solutions to the nonlinear porous medium equation in Section \ref{sec1} and the nonlinear pseudo-parabolic equation in Section \ref{sec2}. 
	
	\section{Nonlinear porous medium equation}\label{sec1}
	In this section, we prove the global solutions and blow-up phenomena of the initial-boundary value problem \eqref{main_eqn_p>2}.
	\subsection{Blow-up solutions of the nonlinear porous medium equation} We start with the blow-up properly. 
	\begin{thm}\label{thm_p>2}
		Let $\mathbb{G}$ be a stratified group with $N_1$ being the dimension of the first stratum. Let $D \subset \mathbb{G}$ be an admissible domain. Let $2\leq  p<\infty$ with $p\neq N_1$. 
		
		Assume that function $f$ satisfies
		\begin{equation}\label{condt_p}
		\alpha F(u) \leq u^{m} f(u) + \beta u^{pm} +\alpha\gamma,\,\,\, u>0,
		\end{equation}
		where 
		\begin{equation*}
		F(u)=\frac{pm}{m+1}\int_{0}^{u}s^{m-1}f(s)ds, \,\,\, m\geq 1,
		\end{equation*}			
		for some
		\begin{equation*}
		\gamma >0,\,\, 0<\beta\leq \frac{|N_1-p|^p}{(pR)^p}\frac{(\alpha - m -1)}{m+1}\,\,\, \text{ and } \,\, \alpha>m+1,
		\end{equation*} 
		where $R = \sup_{x \in D} |x'|$ and $x=(x',x'')$ with $x'$ being in the first stratum.
		Let  $u_0 \in L^{\infty}(D)\cap\mathring{S}^{1,p}(D)$ satisfy the inequality 	 \begin{equation}\label{J(1)}
		J_0:=   - \frac{1}{m+1} \int_{D} |\nabla_{H} u^m_0(x)|^p dx +  \int_{D} (F(u_0(x))-\gamma) dx >0.
		\end{equation} 
		Then any positive solution $u$ of \eqref{main_eqn_p>2} blows up in finite time $T^*,$ i.e., there exists 
		\begin{equation}\label{T}
		0<T^*\leq \frac{M}{\sigma \int_{D}u_0^{m+1}(x)dx},
		\end{equation}
		such that 
		\begin{equation}
		\lim_{t\rightarrow T^*} \int_{0}^t \int_{D} u^{m+1} (x,\tau) dx d\tau = +\infty,
		\end{equation}
		where $M>0$ and $\sigma = \frac{\sqrt{pm\alpha}}{m+1}-1>0$.
		In fact, in \eqref{T}, we can take 
		\begin{equation*}
		M = \frac{(1+\sigma)(1+1/\sigma)(\int_D u_0^{m+1}(x)dx)^2}{\alpha (m+1)J_0}.
		\end{equation*}
	\end{thm}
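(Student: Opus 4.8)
The plan is to use the concavity method of Levine, which is the standard approach for such blow-up results and is hinted at in the introduction. The key is to introduce an auxiliary functional, roughly
\begin{equation*}
A(t) := \int_0^t \int_D u^{m+1}(x,\tau)\,dx\,d\tau + (T_1 - t)\int_D u_0^{m+1}(x)\,dx + M,
\end{equation*}
for a suitably chosen $T_1>0$ and $M>0$, and to show that $A(t)^{-\sigma}$ is concave, i.e. that $\left(A^{-\sigma}\right)'' \le 0$. Equivalently one shows the differential inequality $A A'' - (1+\sigma)(A')^2 \ge 0$. Since $A>0$ and $A'>0$ (the latter because $\int_D u^{m+1}\,dx$ is, after some work, nondecreasing along the flow — this uses positivity of the solution and $f(u)>0$), the concavity of $A^{-\sigma}$ together with $A^{-\sigma}(0) = (T_1 \int_D u_0^{m+1} + M)^{-\sigma}$ and the negativity of its derivative forces $A^{-\sigma}$ to hit zero at some finite $T^* \le T_1$, which gives the blow-up of $\int_0^t\int_D u^{m+1}$; optimizing over $T_1$ and $M$ yields the stated bound on $T^*$.

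The first concrete step is to introduce the energy functional
\begin{equation*}
J(t) := -\frac{1}{m+1}\int_D |\nabla_H u^m(x,t)|^p\,dx + \int_D \bigl(F(u(x,t)) - \gamma\bigr)\,dx,
\end{equation*}
so that $J(0) = J_0 > 0$ by hypothesis \eqref{J(1)}. One differentiates $\int_D u^{m+1}\,dx$ and $J(t)$ in time, using the equation \eqref{main_eqn_p>2}, integration by parts (legitimate by the boundary condition), and the identity $\frac{d}{dt}F(u) = \frac{pm}{m+1} u^{m-1} f(u) u_t$; this should show that $\frac{d}{dt}\int_D u^{m+1}\,dx = (m+1)\int_D u^m u_t\,dx$ relates to $J(t)$ and a positive term, and that $J'(t) \ge 0$, hence $J(t) \ge J_0 > 0$ for all $t$. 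Next I would compute $A'(t)$ and $A''(t)$: here $A'(t) = \int_D u^{m+1}(x,t)\,dx - \int_D u_0^{m+1}(x)\,dx = (m+1)\int_0^t\int_D u^m u_\tau\,dx\,d\tau$ by the fundamental theorem of calculus, and $A''(t) = (m+1)\int_D u^m u_t\,dx$, which by the equation and the hypothesis \eqref{condt_p} combined with the Poincaré inequality (Lemma \ref{lem1}, applied to $u^m$, using $0<\beta \le \frac{|N_1-p|^p}{(pR)^p}\frac{\alpha-m-1}{m+1}$) is bounded below by something like $\alpha(m+1) J(t) \ge \alpha(m+1) J_0$ plus a term proportional to $\int_D|\nabla_H u^m|^p\,dx$ — this is where the constraint on $\beta$ is used to absorb the bad $\beta u^{pm}$ term.

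The heart of the argument — and the step I expect to be the main obstacle — is establishing the Schwarz-type inequality that makes the concavity work, namely bounding $(A'(t))^2$ from above in terms of $A(t) \cdot A''(t)$. One writes $A'(t) - \int_D u_0^{m+1}\,dx = (m+1)\int_0^t\int_D u^m u_\tau \,dx\,d\tau$ and applies Cauchy–Schwarz in the $\tau$ and $x$ variables: $\left(\int_0^t\int_D u^m u_\tau\right)^2 \le \left(\int_0^t\int_D u^{m+1}\right)\left(\int_0^t\int_D u^{m-1}u_\tau^2\right)$. The first factor is $\le A(t)$; the second factor must be controlled by $A''(t)$, which requires going back to the energy identity to show $\int_0^t\int_D u^{m-1}u_\tau^2\,dx\,d\tau$ is comparable to $J(t) - J_0$ or to $A''$. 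Assembling these with the algebraic choice $\sigma = \frac{\sqrt{pm\alpha}}{m+1} - 1$ (which makes $1+\sigma = \frac{\sqrt{pm\alpha}}{m+1}$, and $(1+\sigma)^2 = \frac{pm\alpha}{(m+1)^2}$, matching the constant $\frac{pm}{m+1}$ coming from $F$ times $\alpha$) should yield $A A'' \ge (1+\sigma)(A')^2$. Care is needed with the cross terms involving $\int_D u_0^{m+1}\,dx$ and the free parameter $M$; choosing $M$ large enough (the stated value) and $T_1 = \frac{M}{\sigma \int_D u_0^{m+1}\,dx}$ handles these and gives $T^* \le T_1$, completing the proof.
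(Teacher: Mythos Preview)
Your strategy is exactly the paper's: the concavity method with the energy $J(t)$, the lower bound $E''(t)\ge \alpha(m+1)J(t)=\alpha(m+1)J_0+p\alpha m\int_0^t\!\int_D u^{m-1}u_\tau^2\,dx\,d\tau$ obtained from \eqref{condt_p} together with Lemma~\ref{lem1}, the Cauchy--Schwarz estimate $\bigl(\int_0^t\!\int_D u^m u_\tau\bigr)^2\le\bigl(\int_0^t\!\int_D u^{m+1}\bigr)\bigl(\int_0^t\!\int_D u^{m-1}u_\tau^2\bigr)$, and the choice $\sigma=\frac{\sqrt{pm\alpha}}{m+1}-1$ so that $(1+\sigma)^2(m+1)^2=p m\alpha$.

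The one discrepancy is your auxiliary functional. The paper takes simply
\[
E(t)=\int_0^t\!\int_D u^{m+1}\,dx\,d\tau+M,
\]
with no $(T_1-t)\!\int_D u_0^{m+1}$ term. This matters: with your $A$ one has $A'(0)=0$, so the tangent-line argument for the concave function $A^{-\sigma}$ at $t=0$ gives no decay and the explicit bound $T^*\le M/(\sigma\!\int_D u_0^{m+1})$ does not follow directly. In fact your own identity ``$A'(t)-\int_D u_0^{m+1}=(m+1)\int_0^t\!\int_D u^m u_\tau$'' is precisely the statement $A'(t)=\int_D u^{m+1}(x,t)\,dx$, which holds for the paper's $E$ but not for your $A$ (for which $A'(t)=\int_D u^{m+1}-\int_D u_0^{m+1}$). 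Drop the linear term; then $E'(0)=\int_D u_0^{m+1}>0$, the cross term you anticipate is handled via $(a+b)^2\le(1+\delta)a^2+(1+1/\delta)b^2$ with $\delta=\sigma$, and requiring $\alpha(m+1)MJ_0\ge(1+\sigma)(1+1/\sigma)\bigl(\int_D u_0^{m+1}\bigr)^2$ forces the stated $M$, after which $E^{-\sigma}(t)\le M^{-\sigma}-\sigma M^{-\sigma-1}\!\int_D u_0^{m+1}\cdot t$ gives the $T^*$ bound.
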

	\begin{rem}
		Note that condition on nonlinearity \eqref{condt_p} includes the following cases: 
		\begin{itemize}
			\item[1.] Philippin and Proytcheva \cite{PhP-06} used the condition 
			\begin{equation}
			(2+\epsilon)F(u) \leq u f(u), \,\,\, u>0,
			\end{equation}
			where $\epsilon >0$. It is a special case of an abstract condition by Levine and Payne \cite{LP2}. 
			\item[2.] Bandle and Brunner \cite{Band-Brun} relaxed this condition as follows
			\begin{equation}
			(2+\epsilon)F(u) \leq u f(u) + \gamma, \,\,\, u>0,
			\end{equation}
			where $\epsilon >0$ and $\gamma >0$.
		\end{itemize}
		These cases were established on the bounded domains of the Euclidean space, and it is a new result on the stratified groups.
	\end{rem}
	\begin{proof}[Proof of Theorem \ref{thm_p>2}]
		Assume that $u(x,t)$ is a positive solution of \eqref{main_eqn_p>2}.	We use the concavity method for showing the blow-up phenomena. We introduce the functional
		\begin{align}\label{J1}
		J(t) := -\frac{1}{m+1} \int_{D} |\nabla_{H} u^m(x,t)|^p dx + \int_{D} (F(u(x,t))-\gamma) dx, 
		\end{align}
		and by \eqref{J(1)} we have
		\begin{align}\label{J0}
		J(0) = -\frac{1}{m+1} \int_{D} |\nabla_{H} u^m_0(x)|^p dx + \int_{D} (F(u_0(x))-\gamma) dx>0.
		\end{align}
		Moreover, $J(t)$ can be written in the following form
		\begin{equation}\label{Fp}
		J(t) = J(0) + \int_{0}^t \frac{d J(\tau)}{d\tau}d\tau,
		\end{equation}
		where
		\begin{align*}
		\int_{0}^t \frac{d J(\tau)}{d\tau}d\tau  &=  - \frac{1}{m+1} \int_{0}^t\int_{D} \frac{d}{d\tau}|\nabla_{H} u^m(x,\tau)|^p dx d\tau  + \int_{0}^t \int_{D} \frac{d}{d\tau} (F(u(x,\tau))-\gamma) dx d\tau\\
		& =   - \frac{p}{m+1} \int_{0}^t\int_{D} |\nabla_{H} u^m(x,\tau)|^{p-2} \nabla_{H} u^m \cdot \nabla_{H} (u^m(x,\tau))_{\tau} dx d\tau\\
		& +  \int_{0}^t \int_{D} F_u(u(x,\tau)) u_{\tau}(x,\tau) dx d\tau\\
		& = \frac{p}{m+1} \int_{0}^t \int_{D} [\mathcal{L}_p (u^m )+ f(u)](u^m(x,\tau))_{\tau} dx d\tau \\
		& = \frac{pm}{m+1}\int_{0}^t \int_{D} u^{m-1}(x,\tau) u_{\tau}^2(x,\tau) dx d\tau.
		\end{align*}

		Define
		\begin{equation*}
		E(t) = \int_{0}^t \int_{D} u^{m+1}(x, \tau) dx d\tau + M, \,\, t\geq 0,
		\end{equation*}
		with $M>0$ to be chosen later. Then the first derivative with respect $t$ of $E(t)$ gives
		\begin{equation*}
		E'(t)=\int_{D} u^{m+1}(x,t) dx = (m+1)\int_{D} \int_{0}^t u^{m}(x,\tau) u_{\tau}(x,\tau) d\tau dx + \int_{D} u^{m+1}_0(x) dx.
		\end{equation*}
		By applying  \eqref{condt_p}, Lemma \ref{lem1} and $0<\beta\leq \frac{|N_1-p|^p}{(pR)^p}\frac{(\alpha - m -1)}{m+1}$, we estimate the second derivative of $E(t)$ as follows
		\begin{align*}
		E''(t) &=(m+1) \int_{D} u^{m}(x,t) u_t(x,t) dx\\
		& = -(m+1) \int_{D} |\nabla_{H} u^m(x,t)|^p dx + (m+1)\int_{D} u^{m}(x,t)f(u(x,t))dx
		\\
		& \geq - (m+1) \int_{D} |\nabla_{H} u^m(x,t)|^p dx + (m+1) \int_{D} \left[ \alpha F(u(x,t)) -\beta u^{pm} (x,t) -\alpha \gamma \right] dx\\
		& =\alpha (m+1) \left[ -\frac{1}{m+1} \int_{D} |\nabla_{H} u^m(x,t)|^p dx + \int_{D} (F(u(x,t))-\gamma) dx \right]  \\
		&+ (\alpha-m-1) \int_{D} |\nabla_{H} u^m(x,t)|^p dx -\beta (m+1) \int_{D} u^{pm}(x, t) dx \\
		& \geq \alpha(m+1) \left[ -\frac{1}{m+1} \int_{D} |\nabla_{H} u^m(x,t)|^p dx + \int_{D} (F(u(x,t))-\gamma) dx \right]  \\
		& + \left[ \frac{|N_1-p|^p}{(pR)^p}(\alpha - m-1) - \beta (m+1) \right]\int_{D} u^{pm}(x, t) dx\\
		&  \geq \alpha (m+1)\left[ -\frac{1}{m+1} \int_{D} |\nabla_{H} u^m(x,t)|^p dx + \int_{D} (F(u(x,t))-\gamma) dx \right]\\
		& = \alpha (m+1)J(t)\\
		& = \alpha (m+1) J(0) + p\alpha m \int_{0}^t \int_{D} u^{m-1}(x,\tau) u_{\tau}^2(x,\tau) dx d\tau.
		\end{align*}
		By employing the H\"older and Cauchy-Schwarz inequalities, we obtain the estimate for $[E'(t)]^2$ as follows
		\begin{align*}
		[E'(t)]^2&\leq  (1+\delta)\left( \int_{D} \int_{0}^t (u^{m+1}(x,\tau))_{\tau} d\tau dx  \right)^2 + \left( 1+ \frac{1}{\delta}\right)\left( \int_{D} u_0^{m+1}(x)dx \right)^2 \\
		& = (m+1)^2(1+\delta)\left( \int_{D} \int_{0}^t u^{m}(x,\tau)  u_{\tau}(x,\tau)dx d\tau \right)^2   + \left( 1+ \frac{1}{\delta}\right)\left( \int_{D} u_0^{m+1}(x)dx \right)^2\\
		& = (m+1)^2(1+\delta)\left( \int_{D} \int_{0}^t u^{(m+1)/2 + (m-1)/2}(x,\tau)  u_{\tau}(x,\tau)dx d\tau \right)^2   \\
		&+ \left( 1+ \frac{1}{\delta}\right)\left( \int_{D} u_0^{m+1}(x)dx \right)^2
		\end{align*}
		\begin{align*}
		& \leq (m+1)^2(1+\delta)\left( \int_{D} \left(\int_{0}^t u^{m+1} d\tau\right)^{1/2}\left( \int_{0}^t u^{m-1} u_{\tau}^2(x,\tau)d\tau\right)^{1/2} dx \right)^2   \\
		&+ \left( 1+ \frac{1}{\delta}\right)\left( \int_{D} u_0^{m+1}(x)dx \right)^2 \\
		& \leq (m+1)^2(1+\delta) \left(\int_{0}^t \int_{D} u^{m+1} dx d\tau\right)\left( \int_{0}^t \int_{D} u^{m-1} u_{\tau}^2(x,\tau)dxd\tau \right)   \\
		&+ \left( 1+ \frac{1}{\delta}\right)\left( \int_{D} u_0^{m+1}(x)dx \right)^2,
		\end{align*}
		for arbitrary $\delta>0$. So we have
		\begin{equation}\label{eq-E2}
		\small		[E'(t)]^2\leq (m+1)^2(1+\delta) \left(\int_{0}^t \int_{D} u^{m+1} dx d\tau\right)\left( \int_{0}^t \int_{D} u^{m-1} u_{\tau}^2dxd\tau \right) 
		+ \left( 1+ \frac{1}{\delta}\right)\left( \int_{D} u_0^{m+1}dx \right)^2.
		\end{equation}
		The previous estimates together with $\sigma=\delta= \frac{\sqrt{pm\alpha}}{m+1}-1>0$ where positivity comes from $\alpha > m+1$, imply
		\begin{align*}
		&E''(t) E (t) - (1+\sigma) [E'(t)]^2\\
		&\geq  \alpha M(m+1) \left[ -\frac{1}{m+1} \int_{D} |\nabla_{H} u^m_0|^p dx + \int_{D} (F(u_0)-\gamma)dx\right] 
		\\&+ pm\alpha \left( \int_{0}^t\int_{D} u^{m+1}(x, \tau) dx d\tau \right) \left(\int_{0}^t \int_{D} u_{\tau}^2(x,\tau) u^{m-1}(x,\tau) dx d\tau  \right)\\
		&- (m+1)^2(1+\sigma) (1+\delta) \left(\int_{0}^t \int_{D} u^{m+1} dx d\tau\right)\left( \int_{0}^t \int_{D} u^{m-1} u_{\tau}^2(x,\tau)dxd\tau \right)\\
		& - (1+\sigma)\left( 1+ \frac{1}{\delta}\right)\left( \int_{D} u_0^{m+1}(x)dx \right)^2 \\
		&\geq  \alpha M(m+1) J(0) - (1+\sigma)\left( 1+ \frac{1}{\delta}\right)\left( \int_{D} u_0^{m+1}(x)dx \right)^2.
		\end{align*}	
		By assumption $J(0)>0$, thus if we select 
		$$M =\frac{(1+\sigma)\left( 1+ \frac{1}{\delta}\right)\left( \int_{D} u_0^{m+1}(x)dx \right)^2}{\alpha (m+1) J(0)}, $$ 
		that gives
		\begin{equation}
		E''(t) E (t) - (1+\sigma) (E'(t))^2 \geq 0.
		\end{equation}
		We can see that the above expression for $t\geq 0$ implies 
		\begin{equation*}
		\frac{d}{dt} \left[ \frac{E'(t)}{E^{\sigma+1}(t)} \right] \geq 0  \Rightarrow 	\begin{cases}
		E'(t) \geq \left[ \frac{E'(0)}{E^{\sigma+1}(0)} \right] E^{1+\sigma}(t),\\
		E(0)=M.
		\end{cases}
		\end{equation*}
		Then for $\sigma = \frac{\sqrt{pm\alpha}}{m+1}-1>0$, we arrive at 
		\begin{align*}
		- \frac{1}{\sigma} \left[ E^{-\sigma}(t) - E^{-\sigma}(0)  \right] \geq \frac{E'(0)}{E^{\sigma+1}(0)} t,
		\end{align*}
		and some rearrangements with $E(0)=M$ give
		\begin{equation*}
		E(t) \geq \left( \frac{1}{M^{\sigma}}-\frac{ \sigma \int_{D} u^{m+1}_0(x)dx }{M^{\sigma+1}} t\right)^{-\frac{1}{\sigma}}.
		\end{equation*}
		Then the blow-up time $T^*$ satisfies 
		\begin{equation*}
		0<T^*\leq \frac{M}{\sigma \int_{D} u_0^{m+1}dx}.
		\end{equation*}
		That completes the proof.
	\end{proof}	
	\subsection{Global existence for the nonlinear porous medium equation} We now show that under some assumptions, if a positive solution to \eqref{main_eqn_p>2} exists, its norm is globally controlled.  
	\begin{thm}\label{thm_GEp}
		Let $\mathbb{G}$ be a stratified group with $N_1$ being the dimension of the first stratum. Let $D \subset \mathbb{G}$ be an admissible domain. Let $2 \leq  p<\infty$ with $p\neq N_1$.
		
		Assume that
		\begin{equation}\label{global_cond-p}
		\alpha F(u) \geq u^{m} f(u) + \beta u^{pm} +\alpha\gamma, \,\,\, u>0,
		\end{equation}
		where
		\begin{equation*}
		F(u)=\frac{pm}{m+1}\int_{0}^{u}s^{m-1}f(s)ds, \,\,\, m\geq 1,
		\end{equation*}
		for some
		\begin{equation*}
		\gamma \geq 0, \,\, \alpha \leq 0 \,\,\, \text{ and } \,\,\, \beta \geq \frac{|N_1-p|^p}{(pR)^p}\frac{( \alpha-m-1 )}{m+1},
		\end{equation*}
		where $R = \sup_{x \in D} |x'|$ and $x=(x',x'')$ with $x'$ being in the first stratum.
		
		Assume also that $u_0 \in L^{\infty}(D)\cap \mathring{S}^{1,p}(D)$ satisfies inequality 
		\begin{equation}\label{J(0)}
		J_0:= \int_{D} (F(u_0(x))-\gamma) dx - \frac{1}{m+1} \int_{D} |\nabla_{H} u^m_0(x)|^p dx>0.
		\end{equation}
		If $u$ is a positive local solution of problem \eqref{main_eqn_p>2}, then it is global and satisfies the following estimate:
		\begin{equation*}
		\int_D u^{m+1}(x,t) dx \leq  \int_D u^{m+1}_0(x)dx.
		\end{equation*}
		
	\end{thm}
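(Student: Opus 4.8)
The plan is to run the same functional $J(t)$ as in the blow-up proof, but now exploit the \emph{reversed} hypotheses to show $J(t)$ stays positive and forces $E'(t):=\int_D u^{m+1}(x,t)\,dx$ to be non-increasing. First I would set $J(t) := -\frac{1}{m+1}\int_D |\nabla_H u^m(x,t)|^p\,dx + \int_D (F(u(x,t))-\gamma)\,dx$ exactly as in \eqref{J1}, and recall from the computation in the proof of Theorem \ref{thm_p>2} that $\frac{dJ}{dt} = \frac{pm}{m+1}\int_D u^{m-1}u_\tau^2\,dx \geq 0$, so $J$ is non-decreasing; combined with $J(0)=J_0>0$ this gives $J(t)>0$ for all $t$ in the interval of existence. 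Next I would differentiate $E'(t)=\int_D u^{m+1}\,dx$ to get $E''(t)=(m+1)\int_D u^m u_t\,dx = -(m+1)\int_D |\nabla_H u^m|^p\,dx + (m+1)\int_D u^m f(u)\,dx$, using the equation \eqref{main_eqn_p>2} and integration by parts (the boundary term vanishes since $u=0$ on $\partial D$).

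The key step is to bound $E''(t)$ from above using the reversed condition \eqref{global_cond-p}: $u^m f(u) \leq \alpha F(u) - \beta u^{pm} - \alpha\gamma$, which yields
\begin{align*}
E''(t) &\leq -(m+1)\int_D |\nabla_H u^m|^p\,dx + (m+1)\int_D\!\big[\alpha F(u) - \beta u^{pm} - \alpha\gamma\big]\,dx\\
&= \alpha(m+1) J(t) + (\alpha - m - 1)\int_D |\nabla_H u^m|^p\,dx - \beta(m+1)\int_D u^{pm}\,dx.
\end{align*}
Since $\alpha \leq 0$ and $J(t)>0$, the first term is $\leq 0$. For the remaining two terms I would apply Lemma \ref{lem1} to $u^m$ (note $u^m$ is admissible after the usual density/approximation argument, and $\alpha-m-1<0$ so the inequality flips direction when multiplying), obtaining $(\alpha-m-1)\int_D|\nabla_H u^m|^p\,dx \leq (\alpha-m-1)\frac{|N_1-p|^p}{(pR)^p}\int_D u^{pm}\,dx$; then the hypothesis $\beta \geq \frac{|N_1-p|^p}{(pR)^p}\frac{\alpha-m-1}{m+1}$ is precisely what makes $(\alpha-m-1)\frac{|N_1-p|^p}{(pR)^p} - \beta(m+1) \leq 0$. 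Hence $E''(t) \leq 0$, i.e.\ $E'(t)$ is non-increasing, so $E'(t) \leq E'(0) = \int_D u_0^{m+1}(x)\,dx$, which is exactly the claimed estimate; and since the $L^{m+1}$ norm stays bounded the local solution cannot blow up in finite time, so it is global.

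The main obstacle I anticipate is the same technical point that is glossed over in the blow-up proof: justifying that Lemma \ref{lem1}, stated for $u \in C_0^\infty(D\setminus\{x'=0\})$, applies to $u^m(\cdot,t)$, which a priori only lies in $\mathring{S}^{1,p}(D)$ (via the assumption $u_0 \in L^\infty(D)\cap\mathring{S}^{1,p}(D)$ and parabolic regularity/propagation of this membership). This requires a density argument to extend the Poincar\'e inequality to the completion $\mathring{S}^{1,p}(D)$, together with the observation that removing the lower-dimensional set $\{x'=0\}$ does not change the completion; I would simply invoke this as in the previous theorem. A secondary, minor point is the sign bookkeeping: because $\alpha - m - 1 < 0$ every multiplication by it reverses inequalities, so one must track directions carefully, but no genuine difficulty arises. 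Everything else is a direct mirror of the blow-up argument with inequalities reversed.
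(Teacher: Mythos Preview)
Your proposal is correct and follows essentially the same route as the paper: define $J(t)$ as before, show it is nondecreasing from the identity $J'(t)=\frac{pm}{m+1}\int_D u^{m-1}u_t^2\,dx\ge 0$, bound the derivative of $\int_D u^{m+1}\,dx$ using \eqref{global_cond-p}, apply Lemma~\ref{lem1} with the sign of $\alpha-m-1$ tracked, and conclude the derivative is $\le \alpha(m+1)J(t)\le 0$. The only cosmetic difference is that the paper works directly with $\mathcal E(t):=\int_D u^{m+1}\,dx$ and bounds $\mathcal E'(t)$, whereas you keep the notation $E'(t)$ from the blow-up proof and bound $E''(t)$; the computations are identical.
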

	\begin{proof}[Proof of Theorem \ref{thm_GEp}]
		Recall from the proof of Theorem \ref{thm_p>2}, the functional
		
		\begin{align*}
		J(t) &:= -\frac{1}{m+1} \int_{D} |\nabla_{H} u^m(x,t)|^p dx + \int_{D} (F(u(x,t))-\gamma) dx\\
		& = J_0 + \frac{pm}{m+1}\int_{0}^t \int_{D} u^{m-1}(x,\tau) u_{\tau}^2(x,\tau) dx d\tau.
		\end{align*}
		
		Let us define
		
		\begin{equation*}
		\mathcal E(t) = \int_{D} u^{m+1}(x,t)dx.
		\end{equation*}
		By applying \eqref{global_cond-p}, Lemma \ref{lem1} and $\beta \geq \frac{|N_1-p|^p}{(pR)^p}\frac{( \alpha-m-1 )}{m+1}$, respectively, one finds 
		\begin{align*}
		\mathcal	E'(t) &=(m+1) \int_{D} u^{m}(x,t) u_t(x,t) dx\\
		& = (m+1) \left[\int_{D} u^{m}(x,t) \nabla_{H} \cdot (|\nabla_{H} u^m(x,t)|^{p-2}\nabla_{H} u^m(x,t)) + \int_{D} u^{m}(x,t)f(u(x, t))dx\right] \\
		& = (m+1) \left[-\int_{D} |\nabla_{H} u^m(x,t)|^p dx + \int_{D} u^{m}(x,t)f(u(x,t))dx\right]
		\\
		& \leq (m+1) \left[-\int_{D} |\nabla_{H} u^m(x,t)|^p dx +  \int_{D} \left[ \alpha F(u(x,t)) -\beta u^{pm} (x,t) -\alpha \gamma \right] dx\right]\\
		& =\alpha (m+1) \left[ -\frac{1}{m+1} \int_{D} |\nabla_{H} u^m(x,t)|^p dx + \int_{D} (F(u(x,t))-\gamma) dx \right]  \\
		&- (m+1-\alpha) \int_{D} |\nabla_{H} u^m(x,t)|^p dx -\beta (m+1) \int_{D} u^{pm}(x, t) dx\\
		& \leq \alpha (m+1)\left[ -\frac{1}{m+1} \int_{D} |\nabla_{H} u^m(x,t)|^p dx + \int_{D} (F(u(x,t))-\gamma) dx \right]  \\
		& - \left[ \frac{|N_1-p|^p}{(pR)^p}(m+1-\alpha ) + \beta (m+1) \right]\int_{D} u^{pm}(x, t) dx\\
		&  \leq \alpha (m+1)\left[ -\frac{1}{m+1} \int_{D} |\nabla_{H} u^m(x,t)|^2 dx + \int_{D} (F(u(x,t))-\gamma) dx \right] \\
		&=\alpha (m+1)J(t).
		\end{align*}
		We can rewrite $\mathcal E'(t)$ by using \eqref{Fp} and $\alpha \leq 0$ as follows
		\begin{align}
		\mathcal E'(t) \leq \alpha (m+1) J(0) + p\alpha m \int_{0}^t \int_{D} u^{m-1}(x,\tau) u_{\tau}^2(x,\tau) dx d\tau \leq 0.
		\end{align}
		That gives 
		\begin{equation*}
		\mathcal E(t) \leq \mathcal E(0).
		\end{equation*}
		This completes the proof of Theorem \ref{thm_GEp}.
	\end{proof}

	\section{Nonlinear pseudo-parabolic equation}\label{sec2}
	In this section, we prove the global solutions and blow-up phenomena of the initial-boundary value problem \eqref{main_eqn_p}.

	\subsection{Blow-up phenomena for the pseudo-parabolic equation} We start with conditions ensuring the blow-up of solutions in finite time. 
	\begin{thm}\label{thm_p>21}
		Let $\mathbb{G}$ be a stratified group with $N_1$ being the dimension of the first stratum. Let $D \subset \mathbb{G}$ be an admissible domain. Let $2\leq p<\infty$ with $p\neq N_1$. 
		
		Assume that
		\begin{equation}\label{hyp-blow-p}
		\alpha F(u) \leq u f(u) + \beta u^{p} +\alpha\gamma,\,\,\, u>0,
		\end{equation}
		where 
		\begin{equation*}
		F(u) = \int_{0}^u f(s)ds,
		\end{equation*}
		for some
		\begin{align}
		\alpha>p \,\, &\text{ and } \,\,	0<\beta\leq \frac{|N_1-p|^p}{(pR)^p}\frac{(\alpha - p)}{p}, \\
		\gamma>0 \,\,&\text{ and }\,\, R = \sup_{x \in D} |x'|. \nonumber
		\end{align}
		Assume also that $u_0 \in L^{\infty}(D)\cap \mathring{S}^{1,p}(D)$ satisfies
		\begin{equation}
		\mathcal{F}_0:= -\frac{1}{p} \int_{D} |\nabla_H u_0(x)|^p dx + \int_{D} (F(u_0(x))-\gamma) dx >0.
		\end{equation}
		Then any positive solution $u$ of \eqref{main_eqn_p} blows up in finite time $T^*,$ i.e., there exists 
		\begin{equation}
		0<T^*\leq \frac{M}{\sigma \int_{D} u_0^2 + \frac{2}{p} |\nabla_H u_0|^p dx},
		\end{equation}
		such that
		\begin{equation}
		\lim_{t\rightarrow T^*}\int_{0}^t \int_{D} [u^{2} + \frac{2}{p}|\nabla_H u|^p ]dx d\tau = +\infty,
		\end{equation}
		where $\sigma=\sqrt{\frac{\alpha}{2}}-1>0$ and 
		\begin{equation*}
		M = \frac{(1+\sigma)\left( 1+ \frac{1}{\sigma}\right)\left( \int_{D} u^{2}_0 + \frac{2}{p}|\nabla_H u_0|^p dx \right)^2}{2\alpha \mathcal{F}_0}.
		\end{equation*}
	\end{thm}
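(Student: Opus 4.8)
The plan is to mirror the concavity argument in the proof of Theorem \ref{thm_p>2}, now adapted to the pseudo-parabolic structure of \eqref{main_eqn_p}. Let $u$ be a positive solution of \eqref{main_eqn_p}. First I would record two energy identities. Testing the equation against $u_t$, integrating by parts over $D$ (all boundary terms vanishing since $u=0$ on $\partial D$) and using $\int_D|\nabla_H u|^{p-2}\nabla_H u\cdot\nabla_H u_t\,dx=\frac1p\frac{d}{dt}\int_D|\nabla_H u|^p\,dx$, one gets
\begin{equation*}
\frac{d}{dt}\mathcal{F}(t)=\int_D u_t^2\,dx+\int_D|\nabla_H u|^{p-2}|\nabla_H u_t|^2\,dx\ \ge\ 0,\qquad
\mathcal{F}(t):=-\frac1p\int_D|\nabla_H u|^p\,dx+\int_D\big(F(u)-\gamma\big)\,dx,
\end{equation*}
so that $\mathcal{F}(t)=\mathcal{F}_0+\int_0^t\!\!\int_D\big(u_\tau^2+|\nabla_H u|^{p-2}|\nabla_H u_\tau|^2\big)\,dx\,d\tau$ and hence $\mathcal{F}(t)\ge\mathcal{F}_0>0$ for all $t$. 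Testing against $u$ instead yields the identity $\int_D u u_t\,dx+\int_D|\nabla_H u|^{p-2}\nabla_H u\cdot\nabla_H u_t\,dx+\int_D|\nabla_H u|^p\,dx=\int_D u f(u)\,dx$.

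Next I would introduce, in analogy with $E$ from the proof of Theorem \ref{thm_p>2}, the functional
\begin{equation*}
E(t)=\int_0^t\!\!\int_D\Big(u^2(x,\tau)+\tfrac{2}{p}|\nabla_H u(x,\tau)|^p\Big)\,dx\,d\tau+M,\qquad M>0\ \text{to be fixed later}.
\end{equation*}
Then $E'(t)=\int_D\big(u^2+\tfrac{2}{p}|\nabla_H u|^p\big)\,dx$, and since $\frac{d}{dt}\big(u^2+\tfrac{2}{p}|\nabla_H u|^p\big)$ integrates to $2\big(\int_D u u_t\,dx+\int_D|\nabla_H u|^{p-2}\nabla_H u\cdot\nabla_H u_t\,dx\big)$, the $u$-identity gives $E''(t)=2\int_D u f(u)\,dx-2\int_D|\nabla_H u|^p\,dx$. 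Plugging \eqref{hyp-blow-p} in the form $u f(u)\ge\alpha\big(F(u)-\gamma\big)-\beta u^p$, splitting off the term $2\alpha\mathcal{F}(t)$, and then applying Lemma \ref{lem1} together with $0<\beta\le\frac{|N_1-p|^p}{(pR)^p}\frac{\alpha-p}{p}$ to absorb the residual $\int_D u^p\,dx$ (this is where $\alpha>p$ is used, so that the coefficient $\frac{2(\alpha-p)}{p}$ in front of $\int_D|\nabla_H u|^p\,dx$ is nonnegative), I obtain the clean lower bound
\begin{equation*}
E''(t)\ \ge\ 2\alpha\,\mathcal{F}(t)\ =\ 2\alpha\mathcal{F}_0+2\alpha\int_0^t\!\!\int_D\big(u_\tau^2+|\nabla_H u|^{p-2}|\nabla_H u_\tau|^2\big)\,dx\,d\tau.
\end{equation*}

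For the matching upper bound on $[E'(t)]^2$ I would write $E'(t)=E'(0)+2\int_0^t c(\tau)\,d\tau$, where $c(\tau):=\int_D\big(u u_\tau+|\nabla_H u|^{p-2}\nabla_H u\cdot\nabla_H u_\tau\big)\,dx$, apply $(a+b)^2\le(1+\delta)a^2+(1+\tfrac{1}{\delta})b^2$, and then estimate $c(\tau)$ by the Cauchy--Schwarz inequality on $D$ (splitting $|\nabla_H u|^{p-1}|\nabla_H u_\tau|=|\nabla_H u|^{p/2}\cdot|\nabla_H u|^{(p-2)/2}|\nabla_H u_\tau|$) followed by the Cauchy--Schwarz inequality in $\tau$, which produces
\begin{equation*}
\Big(\int_0^t c\Big)^2\le\Big(\int_0^t\!\!\int_D\big(u^2+|\nabla_H u|^p\big)\Big)\Big(\int_0^t\!\!\int_D\big(u_\tau^2+|\nabla_H u|^{p-2}|\nabla_H u_\tau|^2\big)\Big),
\end{equation*}
the first factor being controlled by $E(t)$. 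Choosing $\sigma=\delta=\sqrt{\alpha/2}-1>0$ (positive since $\alpha>p\ge2$) so that the cross terms in $E(t)E''(t)-(1+\sigma)[E'(t)]^2$ cancel, and then taking $M=\frac{(1+\sigma)(1+1/\sigma)\big(\int_D(u_0^2+\tfrac{2}{p}|\nabla_H u_0|^p)\,dx\big)^2}{2\alpha\,\mathcal{F}_0}$ to absorb the remaining constant (using $\mathcal{F}_0>0$ and $E(t)\ge M$), I reach $E(t)E''(t)\ge(1+\sigma)[E'(t)]^2$. Exactly as in the proof of Theorem \ref{thm_p>2}, this forces $\big(E'/E^{1+\sigma}\big)'\ge0$, whence $E(t)\ge\big(M^{-\sigma}-\sigma M^{-1-\sigma}E'(0)\,t\big)^{-1/\sigma}$ with $E'(0)=\int_D\big(u_0^2+\tfrac{2}{p}|\nabla_H u_0|^p\big)\,dx$, so that $E(t)$ --- and hence $\int_0^t\!\int_D(u^2+\tfrac{2}{p}|\nabla_H u|^p)$ --- becomes infinite as $t$ approaches some $T^*\le\frac{M}{\sigma\int_D(u_0^2+\tfrac{2}{p}|\nabla_H u_0|^p)\,dx}$.

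The integrations by parts and the final ODE are routine; the point requiring the most care is the constant bookkeeping in the Cauchy--Schwarz/concavity step --- making sure that the factor $2\alpha$ coming from $E''\ge2\alpha\mathcal{F}(t)$ genuinely dominates $4(1+\sigma)(1+\delta)$ times the comparison constant between $\int_0^t\!\int_D(u^2+|\nabla_H u|^p)$ and $E(t)$, so that the cross terms cancel for the chosen $\sigma$ (this comparison constant equals $1$ when $p=2$, where $\sigma=\sqrt{\alpha/2}-1$ is exactly right; for $p>2$ it costs a factor $p/2$, which one accommodates by tracking $\alpha/p$ in place of $\alpha/2$). A further, standard caveat, as always with the concavity method, is that differentiating under the integral sign and integrating by parts presuppose enough a priori regularity and integrability of $u$; this is part of the notion of local solution being used, and is also what allows Lemma \ref{lem1} to be applied to $u(\cdot,t)\in\mathring{S}^{1,p}(D)$ by density.
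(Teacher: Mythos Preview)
Your proposal follows essentially the same concavity argument as the paper: the same energy $\mathcal{F}(t)$ and its monotonicity from testing against $u_t$, the same auxiliary $E(t)=\int_0^t\!\int_D(u^2+\tfrac{2}{p}|\nabla_H u|^p)+M$, the lower bound $E''\ge 2\alpha\mathcal{F}(t)$ via \eqref{hyp-blow-p} and Lemma~\ref{lem1}, the $(1+\delta)$--$(1+1/\delta)$ splitting of $[E']^2$ combined with Cauchy--Schwarz, the choice $\sigma=\delta=\sqrt{\alpha/2}-1$, and the same $M$ and ODE conclusion.

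Your closing caveat is well taken and in fact sharper than the paper's treatment. The Cauchy--Schwarz step genuinely gives
\[
\Big(\int_0^t\!\!\int_D\big(uu_\tau+|\nabla_H u|^{p-2}\nabla_H u\!\cdot\!\nabla_H u_\tau\big)\Big)^{2}
\le\Big(\int_0^t\!\!\int_D\big(u^2+|\nabla_H u|^p\big)\Big)\Big(\int_0^t\!\!\int_D\big(u_\tau^2+|\nabla_H u|^{p-2}|\nabla_H u_\tau|^2\big)\Big),
\]
with $u^2+|\nabla_H u|^p$, not $u^2+\tfrac{2}{p}|\nabla_H u|^p$; the paper writes the former but silently matches it against $E(t)$, which carries the latter. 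For $p=2$ there is no discrepancy and $\sigma=\sqrt{\alpha/2}-1$ is exact; for $p>2$ one picks up the factor $p/2$ you mention, and the clean choice becomes $\sigma=\sqrt{\alpha/p}-1$ (still positive since $\alpha>p$), with the corresponding $M$ and $T^*$-bound adjusted accordingly. The overall strategy and conclusion are unaffected.
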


	\begin{proof}[Proof of Theorem \ref{thm_p>21}]
		The proof is based on a concavity method. The main idea is to show that $[E^{-\sigma}_p(t)]''\leq 0$ which means that $E^{-\sigma}_p(t)$ is a concave function, for $E_p(t)$ defined below.
		
		Let us introduce some notations: 
		\begin{align*}
		\mathcal{F}(t) := -\frac{1}{p} \int_{D} |\nabla_H u(x,t)|^p dx + \int_{D} (F(u(x,t))-\gamma) dx, 
		\end{align*}
		and
		\begin{align*}
		\mathcal{F}(0) := -\frac{1}{p} \int_{D} |\nabla_H u_0(x)|^p dx + \int_{D} (F(u_0(x))-\gamma) dx,
		\end{align*}
		with 
		\begin{equation*}
		F(u)=\int_{0}^{u}f(s)ds.
		\end{equation*}
		We know that
		\begin{equation}\label{F-p}
		\mathcal{F}(t) = \mathcal{F}(0) + \int_{0}^t \frac{d \mathcal{F}(\tau)}{d\tau}d\tau,
		\end{equation}
		where
		\begin{align*}
		\int_{0}^t \frac{d \mathcal{F}(\tau)}{d\tau}d\tau  &=  - \frac{1}{p} \int_{0}^t\int_{D} \frac{d}{d\tau}|\nabla_H u|^p dx d\tau  + \int_{0}^t \int_{D} \frac{d}{d\tau} (F(u)-\gamma) dx d\tau\\
		& =  -  \int_{0}^t\int_{D} |\nabla_H u|^{p-2} \nabla u \cdot \nabla_H u_{\tau} dx d\tau +  \int_{0}^t \int_{D} F_u(u) u_{\tau} dx d\tau\\
		& =  \int_{0}^t \int_{D} [\mathcal{L}_p u + f(u)]u_{\tau} dx d\tau \\
		& =  \int_{0}^t \int_{D} u_{\tau}^2 - u_{\tau} \nabla_H \cdot (|\nabla_H u|^{p-2} \nabla_H u_{\tau})dx d\tau\\
		& = \int_{0}^t \int_{D}  u_{\tau}^2  + |\nabla_H u|^{p-2}|\nabla_H u_{\tau}|^2dx d\tau.
		\end{align*}
		
		Let us define
		\begin{align*}
		E_p(t) &:= \int_{0}^t \int_{D}[ u^{2} + \frac{2}{p}|\nabla_H u|^p] dx d\tau + M, \,\, t\geq 0,
		\end{align*}
		with a positive constant $M>0$ to be chosen later. Then
		\begin{align}\label{eq-E}
		E'_p(t) = \int_{D}[ u^2 + \frac{2}{p} |\nabla_H u|^p ]dx = \int_{0}^t \frac{d}{d\tau}\int_{D} [u^2 + \frac{2}{p}|\nabla_H u|^p]dx d\tau + \int_{D} u^{2}_0 + \frac{2}{p}|\nabla_H u_0|^p dx. 
		\end{align}
		Now we estimate $E''_p(t)$ by using assumption \eqref{hyp-blow-p} and integration by parts, that gives
		\begin{align*}
		E''_p(t) &= 2\int_{D} u u_t dx + \frac{2}{p}\int_{D} (|\nabla_H u|^p)_t dx\\
		& =2 \int_{D}  [ u\mathcal{L}_pu+  u \nabla_H \cdot (|\nabla_H u|^{p-2}\nabla_H  u_t) + uf(u)]dx + \frac{2}{p}\int_{D} (|\nabla_H u|^p)_t dx\\
		& = -2 \int_{D} [|\nabla_H u|^p + |\nabla_H u|^{p-2}\nabla_H u \cdot \nabla_H u_t ] dx + 2\int_{D} uf(u)dx + \frac{2}{p}\int_{D} (|\nabla_H u|^p)_t dx
		\\
		& \geq - 2 \int_{D} |\nabla_H u|^p dx + 2 \int_{D} \left[ \alpha F(u) -\beta u^{p}  -\alpha \gamma \right] dx\\
		& =2 \alpha \left[ -\frac{1}{p} \int_{D} |\nabla_H u|^p dx + \int_{D} (F(u)-\gamma) dx \right]  \\
		&+ \frac{2(\alpha-p)}{p} \int_{D} |\nabla_H u|^p dx -2\beta \int_{D} u^{p} dx.
		\end{align*}
		Next we apply Lemma \ref{lem1}, which gives
		\begin{align*}
		& \geq 2\alpha \left[ -\frac{1}{p} \int_{D} |\nabla_H u|^p dx + \int_{D} (F(u)-\gamma) dx \right]  \\
		& + 2\left[\frac{|N_1-p|^p}{(pR)^p}\frac{(\alpha -p)}{p} - \beta \right]\int_{D} u^{p} dx\\
		&  \geq 2\alpha\left[ -\frac{1}{p} \int_{D} |\nabla_H u|^p dx + \int_{D} (F(u)-\gamma) dx \right]\\
		& = 2\alpha \mathcal{F}(t),
		\end{align*}
		with $\mathcal{F}(t)$ as in \eqref{F-p}, then $E''_p(t)$ can be rewritten in the following form 
		\begin{align}
		E''_p(t) \geq 2\alpha  \mathcal{F}(0) + 2\alpha  \int_{0}^t \int_{D}  [u_{\tau}^2  + |\nabla_H u|^{p-2}|\nabla_H u_{\tau}|^2]dx d\tau.
		\end{align}
		Also we have for arbitrary $\delta>0$, in view of \eqref{eq-E}, 
		\begin{align*}
		[E'_p(t)]^2&\leq  (1+\delta)\left( \int_{0}^t \frac{d}{d\tau}\int_{D}[u^2 + \frac{2}{p}|\nabla_H u|^p] dx d\tau \right)^2\\
		& + \left( 1+ \frac{1}{\delta}\right)\left( \int_{D} [u^{2}_0 + \frac{2}{p}|\nabla_H u_0|^p] dx \right)^2.
		\end{align*}
		Then by taking $\sigma=\delta=\sqrt{\frac{\alpha}{2}}-1>0$, we arrive at
		\begin{align*}
		&E''_p(t) E_p(t) - (1+\sigma) [E'_p(t)]^2\\
		&\geq  2\alpha M\mathcal{F}(0)+ 2\alpha \left(\int_{0}^t \int_{D}  [u_{\tau}^2  + |\nabla_H u|^{p-2}|\nabla_H u_{\tau}|^2]dx d\tau \right) \left(\int_{0}^t \int_{D} [u^{2} + \frac{2}{p}|\nabla_H u|^p dx] d\tau \right)\\
		&- (1+\sigma) (1+\delta)\left( \int_{0}^t \frac{d}{d\tau}\int_{D}[u^2 + \frac{2}{p}|\nabla_H u|^p] dx d\tau \right)^2 - (1+\sigma) \left( 1+ \frac{1}{\delta}\right)\left( \int_{D} [u^{2}_0 + \frac{2}{p}|\nabla_H u_0|^p] dx \right)^2 \\
		&= 2\alpha M\mathcal{F}(0) - (1+\sigma)\left( 1+ \frac{1}{\delta}\right)\left( \int_{D} [u^{2}_0 + \frac{2}{p}|\nabla_H u_0|^p] dx \right)^2\\
		& + 2\alpha \left[ \left(\int_{0}^t \int_{D}  [u_{\tau}^2  + |\nabla_H u|^{p-2}|\nabla_H u_{\tau}|^2]dx d\tau \right) \left(\int_{0}^t \int_{D} [u^{2} + \frac{2}{p}|\nabla_H u|^p dx] d\tau \right) \right. \\
		&- \left.\left(\int_{0}^t \int_{D} [uu_{\tau} + |\nabla_H u|^{p-2}\nabla_H u \cdot \nabla_H u_{\tau} ]dx d\tau \right)^2 \right]\\
		&\geq 2 \alpha M \mathcal{F}(0) - (1+\sigma)\left( 1+ \frac{1}{\delta}\right)\left( \int_{D} u^{2}_0 + \frac{2}{p}|\nabla_H u_0|^p dx \right)^2.
		\end{align*}
		Note that in the last line we have used the following inequality
		\begin{align*}
		&\left( \int_{0}^t \int_{D}  [u^2  + |\nabla_H u|^p]dx d\tau \right)\left(\int_{0}^t \int_{D}  [u_\tau^2  +|\nabla_H u|^{p-2} |\nabla_H u_\tau|^2]dx d\tau \right) \\
		&-  \left( \int_{0}^t \int_{D}  [u u_{\tau} + |\nabla_{H} u|^{p-2}\nabla_H u \cdot \nabla_H u_{\tau}]dx d\tau  \right)^2 \\
		& \geq \left[ \left( \int_{D} \int_{0}^t u^2 d\tau dx\right)^{\frac{1}{2}}\left( \int_{D} \int_{0}^t |\nabla_H u|^{p-2}|\nabla_H u_{\tau}|^2 d\tau dx\right)^{\frac{1}{2}} \right. \\
		& \left. - \left( \int_{D} \int_{0}^t |\nabla_H u|^p d\tau dx\right)^{\frac{1}{2}}\left( \int_{D} \int_{0}^t  u_{\tau}^2 d\tau dx\right)^{\frac{1}{2}} \right]^2 \geq 0,
		\end{align*}
		where making use of the H\"older inequality and Cauchy-Schawrz inequality we have
		\begin{align*}\small
		&\left( \int_{0}^t\int_{D}  [u u_{\tau} + |\nabla_H u|^{p-2} \nabla_H u\cdot \nabla_H u_{\tau}]dx d\tau  \right)^2 \\
		\leq &\left( \int_{D} \left(\int_{0}^t u^2 d\tau\right)^{\frac{1}{2}} \left(\int_{0}^t u_{\tau}^2 d\tau\right)^{\frac{1}{2}}  dx   +\int_{D} \left(\int_{0}^t |\nabla_H u|^p d\tau\right)^{\frac{1}{2}}\left(\int_{0}^t |\nabla_{H}u|^{p-2}|\nabla_H u_{\tau}|^2 d\tau\right)^{\frac{1}{2}} dx  \right)^2\\
		=& \left( \int_{D} \left(\int_{0}^t u^2 d\tau\right)^{\frac{1}{2}} \left(\int_{0}^t u_{\tau}^2 d\tau\right)^{\frac{1}{2}}  dx\right)^2+ \left(\int_{D} \left(\int_{0}^t |\nabla_H u|^p d\tau\right)^{\frac{1}{2}}\left(\int_{0}^t |\nabla_{H}u|^{p-2}|\nabla_H u_{\tau}|^2 d\tau\right)^{\frac{1}{2}} dx  \right)^2 \\
		+ &2\left( \int_{D} \left(\int_{0}^t u^2 d\tau\right)^{\frac{1}{2}} \left(\int_{0}^t u_{\tau}^2 d\tau\right)^{\frac{1}{2}}  dx\right)\left(\int_{D} \left(\int_{0}^t |\nabla_H u|^p d\tau\right)^{\frac{1}{2}}\left(\int_{0}^t |\nabla_{H}u|^{p-2}|\nabla_H u_{\tau}|^2 d\tau\right)^{\frac{1}{2}} dx  \right)\\
		\leq& \left( \int_{D} \int_{0}^t u^2 d\tau dx\right)\left( \int_{D} \int_{0}^t u^2_{\tau} d\tau dx\right)+ \left( \int_{D} \int_{0}^t |\nabla_H u|^p d\tau dx\right)\left( \int_{D} \int_{0}^t |\nabla_{H}u|^{p-2}|\nabla_H u_{\tau}|^2 d\tau dx\right)\\
		+ &2\left[ \left( \int_{D} \int_{0}^t u^2 d\tau dx\right)\left( \int_{D} \int_{0}^t u^2_{\tau} d\tau dx\right) \left( \int_{D} \int_{0}^t |\nabla_H u|^p d\tau dx\right)\left( \int_{D} \int_{0}^t |\nabla_{H}u|^{p-2}|\nabla_H u_{\tau}|^2 d\tau dx\right)\right]^{\frac{1}{2}}.
		\end{align*}	
		By assumption $\mathcal{F}(0)>0$, thus we can select 
		\begin{equation*}
		M = \frac{(1+\sigma)\left( 1+ \frac{1}{\delta}\right)\left( \int_{D} u^{2}_0 + \frac{2}{p}|\nabla_H u_0|^p dx \right)^2}{2\alpha \mathcal{F}(0)},
		\end{equation*}
		that gives
		\begin{equation}
		E''_p(t) E_p(t) - (1+\sigma) [E'_p(t)]^2 \geq 0.
		\end{equation}
		We can see that the above expression for $t\geq 0$ implies 
		\begin{equation*}
		\frac{d}{dt} \left[ \frac{E'_p(t)}{E^{\sigma+1}_p(t)} \right] \geq 0  \Rightarrow 	\begin{cases}
		E'_p(t) \geq \left[ \frac{E'_p(0)}{E^{\sigma+1}_p(0)} \right] E^{1+\sigma}_p(t),\\
		E_p(0)=M.
		\end{cases}
		\end{equation*}
		Then for $\sigma =\sqrt{\frac{\alpha}{2}}-1>0$, we arrive at 
		\begin{equation*}
		E_p(t) \geq \left( \frac{1}{M^{\sigma}}-\frac{ \sigma \int_{D} [u_0^2 + \frac{2}{p} |\nabla_H u_0|^p ]dx }{M^{\sigma+1}} t\right)^{-\frac{1}{\sigma}}.
		\end{equation*}
		Then the blow-up time $T^*$ satisfies 
		\begin{equation*}
		0<T^*\leq \frac{M}{\sigma \int_{D} [u_0^2 + \frac{2}{p} |\nabla_H u_0|^p] dx}.
		\end{equation*}
		This completes the proof.
	\end{proof}
	
	\subsection{Global solution for the pseudo-parabolic equation} We now show that positive solutions, when they exist for some nonlinearities, can be controlled.   
	\begin{thm}\label{thm_GEp1}
		Let $\mathbb{G}$ be a stratified group with $N_1$ being the dimension of the first stratum. Let $D \subset \mathbb{G}$ be an admissible domain. Let $2 \leq p<\infty$ with $p\neq N_1$.
		
		Assume that function $f$ satisfies
		\begin{equation}\label{G1}
		\alpha F(u) \geq u f(u) + \beta u^{p} +\alpha\gamma, \,\,\, u>0,
		\end{equation}
		where 
		\begin{equation*}
		F(u) = \int_{0}^u f(s)ds,
		\end{equation*}
		for some
		\begin{equation}
		\beta \geq\frac{( p-\alpha)}{2} \,\,\, \text{ and } \,\,\,\alpha \leq 0, \,\, \gamma \geq 0.
		\end{equation} 
		Let $u_0 \in L^{\infty}(D)\cap \mathring{S}^{1,p}(D)$ satisfy 
		\begin{equation}
		\mathcal{F}_0:= -\frac{1}{p} \int_{D} |\nabla_H u_0(x)|^p dx +\int_{D} (F(u_0(x))-\gamma) dx >0. 
		\end{equation}
		If $u$ is a positive local solution of problem \eqref{main_eqn_p}, then it is global and satisfies the following estimate:
		\begin{equation*}
		\int_{D}[ u^{2} + \frac{2}{p}|\nabla_H u|^p] dx \leq \exp({-(p-\alpha)t})\int_{D}[ u^{2}_0 + \frac{2}{p}|\nabla_H u_0|^p] dx.
		\end{equation*}
	\end{thm}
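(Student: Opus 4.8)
The plan is to imitate the energy‐dissipation argument used for the porous medium case in Theorem \ref{thm_GEp}, but now adapted to the pseudo-parabolic structure, which conveniently produces an exponential (rather than merely bounded) decay. First I would recall from the proof of Theorem \ref{thm_p>21} the functional
\[
\mathcal{F}(t) = -\frac{1}{p}\int_D |\nabla_H u(x,t)|^p\,dx + \int_D (F(u(x,t))-\gamma)\,dx,
\]
together with the identity
\[
\mathcal{F}(t) = \mathcal{F}_0 + \int_0^t \int_D \big(u_\tau^2 + |\nabla_H u|^{p-2}|\nabla_H u_\tau|^2\big)\,dx\,d\tau,
\]
so that in particular $\mathcal{F}(t)\geq \mathcal{F}_0 > 0$ for all $t$ in the maximal interval of existence.

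Next I would introduce the quantity $\mathcal{E}_p(t) := \int_D \big(u^2 + \tfrac{2}{p}|\nabla_H u|^p\big)\,dx$ and compute $\mathcal{E}_p'(t)$. Differentiating and substituting the equation \eqref{main_eqn_p} for $u_t$ — using $u_t - \nabla_H\cdot(|\nabla_H u|^{p-2}\nabla_H u_t) - \mathcal{L}_p u = f(u)$ — together with integration by parts (exactly the computation producing $E_p''$ in the proof of Theorem \ref{thm_p>21}, but one derivative lower) gives
\[
\mathcal{E}_p'(t) = -2\int_D |\nabla_H u|^p\,dx + 2\int_D u f(u)\,dx.
\]
Now I apply the hypothesis \eqref{G1} in the form $u f(u) \leq \alpha F(u) - \beta u^p - \alpha\gamma$, which yields
\[
\mathcal{E}_p'(t) \leq 2\alpha\Big[-\tfrac{1}{p}\int_D|\nabla_H u|^p\,dx + \int_D (F(u)-\gamma)\,dx\Big] + \tfrac{2(\alpha-p)}{p}\int_D |\nabla_H u|^p\,dx - 2\beta\int_D u^p\,dx.
\]
The first bracket is exactly $\mathcal{F}(t)\geq 0$, and since $\alpha\leq 0$ this term is $\leq 0$; for the remaining terms I would use $\beta \geq (p-\alpha)/2$ so that $\tfrac{2(\alpha-p)}{p}\int_D|\nabla_H u|^p - 2\beta\int_D u^p \leq (p-\alpha)\big(-\tfrac{2}{p}\int_D |\nabla_H u|^p - \int_D u^p\big) \leq -(p-\alpha)\int_D\big(u^2 + \tfrac{2}{p}|\nabla_H u|^p\big)\,dx$ — here one must be slightly careful: actually the cleanest route is to keep only the $|\nabla_H u|^p$ part, bound $\tfrac{2(\alpha-p)}{p}\int_D|\nabla_H u|^p - 2\beta\int_D u^p \leq -(p-\alpha)\int_D\big(\tfrac2p|\nabla_H u|^p + u^2\big)$ after noting $2\beta \geq p-\alpha$ and $\tfrac{2(p-\alpha)}{p} = (p-\alpha)\tfrac2p$, which matches the $|\nabla_H u|^p$ coefficient exactly and leaves $-2\beta \leq -(p-\alpha)$ on the $u^p$ term, then discard the non-negative $u^p$ versus $u^2$ discrepancy appropriately. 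This produces the differential inequality $\mathcal{E}_p'(t) \leq -(p-\alpha)\,\mathcal{E}_p(t)$, whence Grönwall gives $\mathcal{E}_p(t)\leq e^{-(p-\alpha)t}\mathcal{E}_p(0)$, which is precisely the claimed estimate; finiteness of $\mathcal{E}_p(t)$ for all $t>0$ then rules out finite-time blow-up of the relevant norm, giving global existence.

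The main obstacle I anticipate is the bookkeeping in the last chain of inequalities: reconciling the term $-2\beta\int_D u^p$ (which controls an $L^p$ quantity) with the desired $-(p-\alpha)\int_D u^2$ in $\mathcal{E}_p$, since $u^p$ and $u^2$ are genuinely different for $p>2$. The resolution is that the sign conditions ($\alpha\leq 0$, $\beta \geq (p-\alpha)/2 \geq 0$) make the $\int_D u^p$ term a negative contribution that can simply be dropped, and one only needs to match the $|\nabla_H u|^p$ coefficients — $\tfrac{2(\alpha-p)}{p} = -(p-\alpha)\cdot\tfrac2p$ — so that the gradient part of $\mathcal{E}_p$ is controlled exactly, while the $u^2$ part of $\mathcal{E}_p$ has no matching negative term but also is not needed because $\mathcal{E}_p' \leq -(p-\alpha)\cdot\tfrac2p\int_D|\nabla_H u|^p$ alone does not close the loop. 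Thus the honest statement of the estimate likely requires keeping the Poincaré inequality (Lemma \ref{lem1}) in play to convert $\int_D u^p$ back into $\int_D|\nabla_H u|^p$ or vice versa — mirroring exactly how the blow-up proof used $\beta \leq \tfrac{|N_1-p|^p}{(pR)^p}\tfrac{\alpha-p}{p}$ — and I would expect the clean exponential rate $p-\alpha$ to emerge only after that substitution; sorting out precisely which inequality chain yields the stated constant is the delicate point, but it is purely a matter of careful algebra rather than a new idea.
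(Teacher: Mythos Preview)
Your approach is exactly the paper's: define $\mathcal{E}(t)=\int_D[u^2+\tfrac{2}{p}|\nabla_H u|^p]\,dx$, compute $\mathcal{E}'(t)=-2\int_D|\nabla_H u|^p\,dx+2\int_D uf(u)\,dx$, insert \eqref{G1}, split off the term $2\alpha\mathcal{F}(t)\leq 0$, and close with Gr\"onwall. The paper does \emph{not} invoke Lemma~\ref{lem1} here at all.

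The obstacle you isolate --- matching $-2\beta\int_D u^{p}\,dx$ against the $-(p-\alpha)\int_D u^{2}\,dx$ needed to reconstitute $\mathcal{E}(t)$ --- is genuine, and you should know that the paper does not actually resolve it either. In the displayed chain the paper rewrites $-\tfrac{2(p-\alpha)}{p}\int_D|\nabla_H u|^p\,dx$ as $-(p-\alpha)\big[\mathcal{E}(t)-\int_D u^2\,dx\big]$ (which is fine) and then silently replaces $-2\beta\int_D u^{p}\,dx$ by $-2\beta\int_D u^{2}\,dx$, after which $2\beta\geq p-\alpha$ kills the leftover $+(p-\alpha)\int_D u^2$. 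That replacement $u^p\to u^2$ is precisely the step you are worried about, and for $p>2$ it is not an inequality in either direction (it fails on $\{0<u<1\}$ one way and on $\{u>1\}$ the other). Your instinct that Poincar\'e might be needed, or that the constant $p-\alpha$ might not survive honest bookkeeping, is sound; but the paper's own proof offers no further mechanism beyond what you already wrote down. For $p=2$ there is no issue and both arguments are complete as stated.
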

	\begin{proof}[Proof of Theorem \ref{thm_GEp1}]
		Define
		\begin{align*}
		\mathcal{E}(t) :=  \int_{D}[ u^{2} + \frac{2}{p}|\nabla_H u|^p] dx.
		\end{align*}
		Now we estimate $\mathcal{E}'(t) $ by using assumption \eqref{G1}, that gives
		\begin{align*}
		\mathcal{E}'(t)  &= 2\int_{D} u u_t dx +\frac{2}{p} \int_{D} (|\nabla_H u|^p)_t dx\\
		& =2 \int_{D}  [u\mathcal{L}_p u + u\nabla_H \cdot (|\nabla_H u|^{p-2}\nabla_H u_t) + uf(u)]dx + \frac{2}{p} \int_{D} (|\nabla_H u|^p)_t dx\\
		& = -2 \int_{D} [|\nabla_H u|^p + |\nabla_H u|^{p-2}\nabla_H u \cdot \nabla_H u_t ] dx + 2\int_{D} uf(u)dx + \frac{2}{p}\int_{D} (|\nabla_H u|^p)_t dx
		\\
		& \leq 2\alpha \left[ -\frac{1}{p} \int_{D} |\nabla_H u|^p dx + \int_{D} (F(u)-\gamma) dx \right]  - \frac{2(p-\alpha)}{p}  \int_{D} |\nabla_H u|^p dx -2\beta  \int_{D} u^{p} dx \\
		& \leq  2\alpha \left[ -\frac{1}{p} \int_{D} |\nabla_H u|^p dx + \int_{D} (F(u)-\gamma) dx \right] \\
		&- (p-\alpha)[E_p(t) - \int_D u^2 dx]  dx -2\beta  \int_{D} u^{2} dx,\\
		& =2\alpha \mathcal{F}(t) - (p-\alpha)\mathcal{E}(t)  + [p-\alpha -2\beta]\int_{D}u^2 dx,
		\end{align*}
		with
		\begin{align*}
		\mathcal{F}(t)&:= -\frac{1}{p} \int_{D} |\nabla_H u(x,t)|^p dx + \int_{D} (F(u(x,t))-\gamma) dx\\
		&= \mathcal{F}_0 + \int_{0}^t \int_{D}  u_{\tau}^2  + |\nabla_H u|^{p-2}|\nabla_H u_{\tau}|^2dx d\tau.
		\end{align*}
		Since $\beta \geq \frac{p-\alpha}{2}$ we arrive at
		\begin{align*}
		\mathcal{E}'(t)  + (p-\alpha)\mathcal{E}(t)  \leq 2 \alpha \left[ \mathcal{F}_0 +   \int_{0}^t \int_{D}  u_{\tau}^2  + |\nabla_H u|^{p-2}|\nabla_H u_{\tau}|^2dx d\tau \right]\leq 0.
		\end{align*}
		This implies, 
		\begin{equation*}
		\mathcal{E}(t)  \leq \exp({-(p-\alpha)t})\mathcal{E}(0),
		\end{equation*}
		finishing the proof.
	\end{proof}


\begin{thebibliography}{NZW01}
		
		\bibitem{AAK1} Ahmad B., Alsaedi A., Kirane M.,  Al-Yami M.: Nonexistence results for higher order pseudo-parabolic equations in the Heisenberg group. {\it Math. Methods Appl. Sci.} 40, 1280--1287 (2017)
		
		\bibitem{AAK2} Ahmad B., Alsaedi A., Kirane M.: Blow-up of solutions to parabolic inequalities in the Heisenberg group. {\it Electron. J. Differential Equations}, 2015, 1--9, (2015)
		
		\bibitem{Ball}
		Ball J. M.: 
		\newblock Remarks on blow-up and nonexistence theorems for nonlinear evolution equations. \textit{Quart. J. Math.}, 28, 473--486 (1977)
		
		\bibitem{Band-Brun}
		Bandle C., Brunner H.: 
		\newblock Blow-up in diffusion equations, a survey. \textit{J. Comput. Appl. Math.}, 97, 3--22 (1998)
		
		\bibitem{Baren} Barenblatt G. I., Garcia-Azorero J., De Pablo A., Vazquez J. L.: Mathematical model of the non-equilibrium water-oil displacement in porous strata. {\it Appl. Anal.}, 65, 19--45, (1997)
		
		\bibitem{BBM72} Benjamin T. B., Bona J. L., Mahony J. J.: Model equations for long waves in nonlinear dispersive systems. {\it Philos. Trans. Royal Soc. London. Ser. A}, 272, 47--78 (1972)
		
		\bibitem{CDG1993}
		Capogna L., Danielli D., and Garofalo N.: An embedding theorem and the Harnack inequality for nonlinear subelliptic equation. {\it Comm. Partial Differential Equations}, 18, 1765--1794 (1993) 
		
		\bibitem{Chung-Choi}
		Chung Soon-Yeong, Choi Min-Jun: 
		\newblock A new condition for the concavity method of blow-up solutions to $p$-Laplacian parabolic equations. \textit{J. Differential Equations}, 265, 6384--6399 (2018)
		
		\bibitem{Chen-Fila-Guo} 
		Chen X., Fila M., Guo J. S.: Boundedness of global solutions of a supercritical parabolic equation. \textit{Nonlinear Anal.}, 68, 621--628 (2008) 
		
		\bibitem{CG68} Chen P. J., Gurtin M. E.: On a theory of heat conduction involving two temperatures. {\it Z. Angew. Math. Phys.}, 19, 614--627 (1968)
		
		\bibitem{Ding-Hu}
		Ding J., Hu H.: 
		\newblock Blow-up and global solutions for a class of nonlinear reaction diffusion equations under Dirichlet boundary conditions. \textit{J. Math. Anal. Appl.}, 433, 1718--1735 (2016)
		
		\bibitem{Deng-Levine}
		Deng K., Levine H. A.: The role of critical exponents in blow-up theorems: The sequel. \textit{J. Math. Anal. Appl.}, 243, 85--126 (2000)
		
		\bibitem{DL1} D'Ambrosio L.: Critical degenerate inequalities on the Heisenberg group. {\it Manuscripta Mathematica},  106, 519--536 (2001)
		
		\bibitem{FR}
		Fischer V., Ruzhansky M.: Quantization on nilpotent Lie groups. \textit{Progress in Mathematics}, Vol. 314, Birkh\"auser, 2016. (open access book)
		
		\bibitem{Gal-Vaz-97}
		Galaktionov V. A., V\'azquez J. L.: Continuation of blowup solutions of nonlinear heat equations in several dimensions. \textit{Comm. Pure Appl. Math.}, 50, 1--67 (1997)
		
		\bibitem{Gr-Mu-Po-13}
		Grillo G., Muratori M., Porzio M.: Porous media equations with two weights: Existence, uniqueness, smoothing and decay properties of energy solutions via Poincar\'e inequalities. \textit{ Discrete Contin. Dyn. Syst.}, 33, 3599--3640 (2013)
		
		\bibitem{Gr-Mu-Pu-1}
		Grillo G., Muratori M., Punzo F.: Fractional porous media equations: Existence and uniqueness of weak solutions with measure data. \textit{ Calc. Var. Partial Differential Equations}, 54, 3303--3335 (2015)
		
		\bibitem{Gr-Mu-Pu-2}
		Grillo G., Muratori M., Punzo F.: On the asymptotic behaviour of solutions to the fractional porous medium equation with variable density. \textit{Discrete Contin. Dyn. Syst.}, 35, 5927--5962 (2015)
		
		\bibitem{Gr-Mu-Pu-3}
		Grillo G., Muratori M., Punzo F.: Blow-up and global existence for the porous medium equation with reaction on a class of Cartan–Hadamard manifolds. \textit{J. Differential Equations}, 266, 4305--4336 (2019)
		
		\bibitem{Hayakawa}
		Hayakawa K.: On nonexistence of global solutions of some semilinear parabolic differential equations. \textit{Proc. Japan Acad.}, 49, 503--505 (1973)
		
		\bibitem{Ia-San-14}
		Iagar R. G., Sanchez A.: Large time behavior for a porous medium equation in a nonhomogeneous medium with critical density. \textit{Nonlinear Anal.}, 102, 10.1016 (2014)
		
		\bibitem{Ia-San-19}
		Iagar R. G., Sanchez A.: Blow up profiles for a quasilinear reaction–diffusion equation with weighted reaction with linear growth. \textit{J. Dynam. Differential Equations}, 31, 2061--2094 (2019)
		
		\bibitem{KL06}
		Kawohl B., Lindqvist P.: Positive eigenfunctions for the $p$-Laplace operator revisited. \textit{Analysis}, 26, 545-550 (2006)
		
		\bibitem{JKS1} M. Jleli, M. Kirane, B. Samet. Nonexistence results for a class of evolution equations in the Heisenberg group. {\it Fractional Calculus and Applied Analysis}, 18, 717--734 (2015)
		
		\bibitem{JKS2} M. Jleli, M. Kirane, B. Samet. Nonexistence results for pseudo-parabolic equations in the Heisenberg group. {\it Monatsh. Math.}, 180, 255--270 (2016)
		
		\bibitem{Korpusov1}
		Korpusov M. O., Sveshnikov A. G.: On the blow-up of solutions of semilinear pseudoparabolic equations with rapidly growing nonlinearities. {\it Zh. Vychisl. Mat. Mat. Fiz.}, 45:1, 145--155 (2005) (in russian)
		
		\bibitem{Korpusov2}
		Korpusov M. O., Sveshnikov A. G.: On the blow-up in a finite time of solutions of initial-boundary-value problems for pseudoparabolic equations with the pseudo-Laplacian. {\it Zh. Vychisl. Mat. Mat. Fiz.}, 45:2, 272--286 (2005) (in russian)
		
		\bibitem{Levine90}
		Levine H. A.: The role of critical exponents in blow-up theorems. \textit{SIAM Rev.}, 32,  262--288 (1990)
		
		\bibitem{Levine}
		Levine H. A.: 
		\newblock Some nonexistence and instability theorems for formally parabolic equations of the form $Pu_t=-Au +J(u)$. \textit{Arch. Ration. Mech. Anal.}, 51, 277--284 (1973)
		
		\bibitem{LP1}
		Levine H. A., Payne L. E.: Nonexistence theorems for the heat equation with nonlinear boundary conditions and for the porous medium equation backward in time. \textit{J. Differential Equations}, 16, 319--334 (1974)
		
		\bibitem{LP2}
		Levine H. A., Payne L. E.: Some nonexistence theorems for initial-boundary value problems with nonlinear boundary constraints. \textit{Proc. Amer. Math. Soc.}, 46, 277--284 (1974)
		
		\bibitem{LS84}
		Levine H. A., Sacks P. E.: Some existence and nonexistence theorems for solutions of degenerate parabolic equations. {\it J. Differential Equations}, 52, 135--161 (1984)	
		
		\bibitem{Long}
		Long, Q.F., Chen, J.Q.: Blow-up phenomena for a nonlinear pseudo-parabolic equation with nonlocal source. {\it Appl. Math. Lett.}, 74, 181--186 (2017)
		
		\bibitem{Luo}
		Luo, P.: Blow-up phenomena for a pseudo-parabolic equation. {\it Math. Meth. Appl. Sci.}, 38:12, 2636--2641 (2015)
		
		\bibitem{PhP-06} 
		Philippin G. A., Proytcheva V.: Some remarks on the asymptotic behaviour of the solutions of a class of parabolic problems. \textit{Math. Methods Appl. Sci.}, 29, 297--307 (2006)
		
		\bibitem{PPP-10} 
		Payne L. E., Philippin G. A., Piro S. V.: Blow-up phenomena for a semilinear heat equation with nonlinear boundary condition, II. \textit{Nonlinear Anal.}, 73, 971--978 (2010)
		
		\bibitem{Peng}
		Peng, X. M., Shang, Y. D., Zheng, X. X.: Blow-up phenomena for some nonlinear pseudo-parabolic equations. \textit{ Appl. Math. Lett.}, 56, 17–22 (2016)
		
		\bibitem{RS_JDE}
		Ruzhansky M., Suragan D.: On horizontal Hardy, Rellich, Caffarelli-Kohn–Nirenberg and $p$-sub-Laplacian inequalities on stratified groups. \textit{J. Differential Equations}, 262, 1799--1821 (2017)
		
		\bibitem{RS_book}
		Ruzhansky M., Suragan D.: Hardy inequalities on homogeneous groups. \textit{Progress in Math.} Vol. 327, Birkh\"auser, 588 pp, 2019. (open access book)
		
		\bibitem{RY} Ruzhansky M., Yessirkegenov N.: Existence and non-existence of global solutions for semilinear heat equations and inequalities on sub-Riemannian manifolds, and Fujita exponent on unimodular Lie groups. arXiv:1812.01933, (2019)
		
		\bibitem{ST-21}
		Sabitbek B., Torebek B.: Global existence and blow-up of solutions to the nonlinear porous medium equation. arXiv:2104.06896, (2021) 
		
		\bibitem{Sam-Gal-Ku-Mik}
		Samarskii A. A., Galaktionov V. A., Kurdyumov S. P., Mikhailov A. P.: Blow-Up in Quasilinear Parabolic Equations. in: De Gruyter Expositions in Mathematics, vol. 19, Walter de Gruyter Co., Berlin, 1995.
		
		\bibitem{Souplet}
		Souplet P.: Morrey spaces and classification of global solutions for a supercritical semilinear heat equation in $R^n$. \textit{J. Funct. Anal.}, 272, 2005--2037 (2017)
		
		
		\bibitem{Sch09}
		Schaefer P. W.: Blow-up phenomena in some porous medium problems. \textit{ Dyn. Sys. and Appl.}, 18, 103-110 (2009)
		\bibitem{Vaz}
		Vazquez J. L.: The Porous Medium Equation: Mathematical Theory. Oxford University Press, 2006.
		
		
		\bibitem{PohVer} V\'{e}ron L., Pohozaev S. I.: Nonexistence results of solutions of semilinear differential inequalities on the Heisenberg group. {\it Manuscripta Math.}, 102, 85--99 (2000)
		
		\bibitem{Xu1}
		Xu, R.Z., Su, J.: Global existence and finite time blow-up for a class of semilinear pseudo-parabolic equations. {\it J. Funct. Anal.}, 264:12, 2732--2763 (2013)
		
		\bibitem{Xu2}
		Xu, R. Z., Wang, X. C., Yang, Y. B.: Blowup and blowup time for a class of semilinear pseudo-parabolic equations with high initial energy. {\it Appl. Math. Lett.}, 83, 176--181 (2018)
		
		\bibitem{Xu3}
		Wang, X. C., Xu, R. Z.: Global existence and finite time blowup for a nonlocal semilinear pseudo-parabolic equation. {\it Adv. Nonlinear Anal.}, 10:1, 261--288 (2021)
		
	\end{thebibliography}
\end{document}